\renewcommand{\geq}{\geqslant}
\renewcommand{\leq}{\leqslant}
\title{\LARGE \bf
  Integrating Aggregated Electric Vehicle Flexibilities in Unit Commitment Models using Submodular Optimization
}
\author{Hélène Arvis$^{1,2,3}$, Olivier Beaude$^{1}$, Nicolas Gast$^{2}$, Stéphane Gaubert$^{3}$ and Bruno Gaujal$^{2}$
\thanks{*This work was supported by the Défi Inria-EDF}
\thanks{$^{1}$EDF Lab Paris-Saclay, OSIRIS department,
        {\tt\scriptsize \{Firstname.Name\}@edf.fr}}
\thanks{$^{2}$Univ. Grenoble Alpes, Inria, CNRS, Grenoble INP, LIG,
        {\tt\scriptsize \{Firstname.Name\}@inria.fr}}
\thanks{$^{3}$Inria, and CMAP, \'Ecole polytechnique, IP Paris, CNRS,
  {\tt\scriptsize  \{Firstname.Name\}@inria.fr}}
}
\begin{document}

\maketitle
\thispagestyle{empty}
\pagestyle{empty}

\begin{abstract}
  The \ac{UC} problem consists in controlling a large fleet of
  heterogeneous electricity production units in order to minimize
  the total production cost while satisfying consumer demand. \acp{EV} are used as a source of flexibility and are often aggregated for problem tractability. 
We develop a new approach to integrate \ac{EV} flexibilities in the \ac{UC} problem and exploit the generalized polymatroid structure of aggregated flexibilities of a large population of users to develop an exact optimization algorithm, combining a cutting-plane approach and submodular optimization.
We show in particular that the \ac{UC} can be solved exactly
in a time which scales linearly, up to a logarithmic factor, in the number of \ac{EV} users when each production unit is subject to convex constraints.
We illustrate our approach by solving
  a real instance of a long-term \ac{UC} problem, combining open-source data of the European grid (European Resource Adequacy Assessment project)
  and data originating from a survey of user behavior of the French \ac{EV} fleet.
\end{abstract}

\section{INTRODUCTION}
\subsection{Context}

As the installed capacity for renewable energy sources increases around the world, so does the need for flexibility of the electric power system to absorb their fluctuations. In particular, these flexibilities, including hydropower or \acfp{EV}. must be correctly modeled in long-term optimization problems to evaluate their contribution to the system. \acf{UC} problems are a focus of study in this context and consist in dispatching electricity to balance demand and production while respecting the constraints of large number of heteregeneous productions units,
generally including nuclear power plants (subject to specific dynamic constraints), thermal plants, hydroelectric plants, and other renewable enery sources.

The \ac{UC} problem has been studied since the 1940s and progressively enriched by resolution techniques, as well as by the appearance of new production means, as demonstrated by the literature review of Abdou and Tkiouat \cite{abdou_unit_2018}. We focus on the deterministic version of the \ac{UC}, though it can also be studied in the context of uncertainties \cite{tahanan_large-scale_2015}.  

Typically, the \ac{UC} is a very large \ac{MILP}, modeling dispatch decisions over hundreds of time steps with thousands of power plants in a country. A common approach to solving these problems is through aggregation, i.e., replacing a group of similar units by a class representative in order to reduce problem size and compute solutions in reasonable time. 
The clustering of similar units has been explored by Meus et al. \cite{meus_applicability_2018}, who demonstrate the benefits of aggregation in terms of tractability. However, aggregation generally leads to approximate solutions.

In this paper, we consider the specific case of \ac{EV} charging profiles, in particular the dispatch of power through smart charging by a centralized operator. The correct integration of \acp{EV} in long-term, thus high-level, \ac{UC} problems has an impact on realism of solutions. In France in 2035, there will be an estimated 15.6 million light-duty \acp{EV} \cite{rte_integration_2019}; if all vehicles charge simultaneously at 6kW, this amounts to a national demand of 93.6 GW. In comparison, the French installed production capacity is estimated to reach around 210GW (with 90GW of controllable means) in 2035 \cite{rte_bilan_2023}, thus would be heavily impacted by uncoordinated charging.
Hence, our goal is to integrate an accurate and tractable model of \ac{EV} flexibilities in global \ac{UC} formulations.

Aggregating a large number of flexibilities boils down to computing, or approximating, the Minkowski sum of a large number of polytopes, where each summand represents the flexibility set of an individual user
or group of similar users. 
This Minkowski sum generally has exponentially many inequalities; moreover, calculating the Minkowski sum of a family of polytopes defined by their facets is generally NP-hard \cite{tiwary_hardness_2008}.
Therefore, techniques must be explored to avoid an explicit computation of this Minkowski sum. The traditional, ``direct'', aggregation method consists in summing the characteristics of individual users so as to have a "super-user" that represents the whole population. This generally leads to an aggregation error.
Special cases in which an exact and tractable aggregation is possible have been
identified, by Beaude et al.~\cite{beaude_privacy-preserving_2020} where each local constraint set
has the structure of a (bipartite) network flow polytope, and recently
by Mukhi et al.~\cite{mukhi_exact_2025}, where each local constraint set is shown to have a generalized
polymatroid structure.

\subsection{Contribution}

We study a general \ac{UC} model, including both power production units and a detailed model of \ac{EV} flexibilities. This model captures charging constraint sets over a one week horizon, with multiple charging slots and specific energy consumption constraints (arising for instance from daily commuting).
We leverage the generalized-polymatroid structure of the \ac{EV}-component of the \ac{UC} problem to develop an exact algorithm, combining a cutting-plane approach with submodular optimization.
We show in particular that, when the constraint sets of the power plants are convex, the \ac{UC} problem can be solved to optimality in a time which scales as $N\log N$, where $N$ is the number
of \ac{EV} user profiles (see~\Cref{coro-synthesis}).
We also present a practical cutting-plane algorithm
(\Cref{cutting_planes})
and demonstrate its efficiency on a realistic case study, solving a long-term \ac{UC} problem with European grid data (European Resource Adequacy Assessment project \cite{entso-e_eraa_nodate}) and \ac{EV} data taken from a
survey of French driver behavior \cite{ministere_de_lenvironnement_sdes_enquete_2021}.
In particular, the algorithm scales well relative to the number of iterations and computed cutting planes.

\subsection{Related work}

The specific case of \ac{EV} aggregation has already been studied, though not always in the context of a large \ac{UC} problem. A local use case is the resource allocation problem in which an aggregator must optimize a global cost under given \ac{EV} constraints and a global power bound. Müller et al. \cite{muller_aggregation_2015} offered an alternative to direct aggregation with an inner approximation of the Minkowski sum relying on zonotopes, themselves stable by Minkowski sum. 
Jacquot et al.~\cite{jacquot_privacy-preserving_2019, beaude_privacy-preserving_2020}
studied a similar \ac{EV} resource allocation problem with the additional constraint of preserving the privacy of individuals. To do so, they developed an exact aggregation method exploiting the combinatorial properties of the model (Hoffman cuts for bipartite network flows).

More recently, the aggregated power allocation problem in the context of smart charging (no \ac{V2G}) was found to have the structure of a generalized permutahedron by Mukhi and Abate~\cite{mukhi_exact_2023},
ensuring an exact aggregation of \ac{EV} constraint sets. Panda and Tindemans~\cite{panda_efficient_2024} derived a different approach on a more restrictive use case of homogeneous charging slots based on a concept of UL-flexiblity elaborated by the authors. This notion exploits the permutability of charging patterns to allow for stability by Minkowski sum, thus giving an efficient and scalable approximation. 

Mukhi et al.  \cite{mukhi_exact_2025} observed that \ac{EV} flexibility sets have the structure of generalized polymatroids, a fundamental class of polyhedra introduced by Frank~\cite{frank_generalized_1984} (see also \cite{frank_generalized_1988}). They are special cases of the polyhedra associated to unimodular systems, studied in the broader setting of discrete convexity by Danilov and Koshevoy \cite{danilov_discrete_2004}. Mukhi et al. developed an exact disaggregation method based on generalized polymatroid properties. 
Independently (before the appearance of \cite{mukhi_exact_2025}) Koshevoy pointed out to us~\cite{koshevoy_personnal_2025} that polyhedra associated to the discrete convexity theory of the root system $A_n$, i.e., generalized polymatroids,
 can be used to describe \ac{EV} constraint sets,  leading us to a similar approach.
A more recent work~\cite{mukhi_polymatroidal_2025} extends the work of \cite{mukhi_exact_2025} to model network constraints.

However, the models of~\cite{mukhi_exact_2025, mukhi_polymatroidal_2025} only deal with \ac{EV} charging. In this case, the optimization problem reduces to minimizing a linear cost over a generalized polymatroid, which can be done exactly by the greedy algorithm. 
Here, we consider the full \ac{UC} problem, of which \acp{EV} are only a component. We show that this problem
remains tractable, albeit by more complex methods (practical cutting-plane approaches), using submodular optimization to obtain separation oracles.

Finally, Angeli et al. \cite{angeli_exact_2023} undertook the aggregation of \ac{EV} flexibilities in the \ac{UC} problem in the case of unique charging sessions per vehicle by a constraint-selecting mechanism which can be approximated by a greedy algorithm. 

\subsection{Organisation of the paper}
In \Cref{sec:UC}, we present the \ac{UC} problem and the \ac{EV} constraint set. In \Cref{sec:gpoly}, we recall basic results of generalized polymatroids and their applications to \acp{EV}, along the lines of \cite{koshevoy_personnal_2025} and \cite{mukhi_exact_2025}. In \Cref{sec-theo-results}, we present theoretical results on the complexity of solving the \ac{UC} problem with an exact aggregation of \acp{EV}, as well as present a practical cutting-plane algorithm for this optimization.
The corresponding proofs can be found in 
an extended version of the paper.
Numerical results are presented and discussed in \Cref{sec-benchmark}. Finally, we offer some concluding remarks in \Cref{sec-conclusion}.

\section{THE \ac{UC} PROBLEM AND \ac{EV} FLEXIBILITIES}\label{sec:UC}%
We consider a \acl{UC} problem on a discrete time horizon $\T = \{1, ..., T\}$ with time steps of duration $\tau$. In practice, the horizon is often one week, while the time step is equal to an hour. 

\subsection{Modeling the \acl{UC} problem}

Consider a set $\mathcal{M}$ of production units. Each element $m\in\mathcal{M}$ represents a single unit, which may be a nuclear or thermal (gas, coal, etc.) power plant, as well as a hydroelectric plant.
The production of unit $m$ is represented by a vector $\prodvarv^m=(\prodvar^m_t)_{t\in \T}\in \R^{\T}$, where $\prodvar^m_t$ represents the production at time $t$.

We require the vector $\prodvarv^m$ to belong to a {\em constraint set} $\prodconstraintset^m$, which can differ according to the type of production unit.
This constraint set generally includes upper and lower bounds on production at each instant, i.e., $\underline{k}_t^m \leq \prodvar_t^m \leq \overline{k}_t^m$ for given bounds $(\underline{k}^m_t)_{t\in\T}, (\overline{k}^m_t)_{t\in\T} \in \R^\T$.
For instance, nuclear units are subject to ramping constraints, as well as to minimal on/off duration constraints. The latter lead to \ac{MILP} formulations, rendering the corresponding set $\prodconstraintset^m$ non-convex.

Variable (uncontrollable) renewable energy sources (solar, wind) are integrated via a residual demand $\vect{D}\in \R^{\T}$, obtained by subtracting the renewable production from the power consumption, at every instant $t$.
Finally, we consider an aggregate of \ac{EV} flexibilities with constraint set $\constraintset$, 
which aggregates the behavior of a large heterogeneous population of users.
We will elaborate on this set in the next section.

The \ac{UC} problem is of the form
\begin{subequations}
\begin{alignat}{4}
&&& \min_{\prodvarv, \powerv}& \ \sum_{m\in\mathcal{M}, t\in\T} c_t^m \prodvar_t^m &   \quad            \quad && \label{eq: uc_objective}\\
&&& \text{s.t.}   \quad & \sum_{m} \prodvar_t^m &= D_t + \power_t,  \quad && \forall t\in\mathcal{T} \label{eq: uc_demand}\\
& \quad &&&  \prodvarv^m &\in \prodconstraintset^m, && \forall m\in \mathcal{M} \label{eq: uc_prod_constraints} \\
& \quad &&&  \powerv &\in \constraintset \enspace .&& \label{eq: uc_ev_constraints}%
\end{alignat}
\label{eq: UC_problem}%
\end{subequations}
Here, $c^m_t$ denotes the unitary production cost
at time $t$ for the production plant $m$ so that the total production cost is linear in the dispatching production variables $\prodvarv^m$. The variable $\powerv\in\R^\T$ represents the charging power (positive or negative, in the case of \ac{V2G}) of the fleet of \acp{EV}. Finally, the relation \eqref{eq: uc_demand} expresses the balance between production and demand at every time step.

\subsection{Modeling \ac{EV} flexibilities}\label{sec-ev_constraints}%
We consider \ac{EV} constraints over the whole time horizon. For an individual, this requires the knowledge of charging availability slots, maximal charging power, and energy need at the end of each slot. 
Consider an \ac{EV} $n\in\N$ and its charging profile $\powerv^n$. At each time step $t\in\T$, we have 
\begin{equation} \label{ev_power_bounds}%
\underline{\power}^n_t \leq \power^n_t \leq \overline{\power}^n_t \enspace .
\end{equation}
If the \ac{EV} is plugged in at time $t$, then $\overline{\power}^n_t$ is equal to the maximal charging power available, typically based on charger limits, and $\underline{\power}^n_t$ is equal to the maximal discharging power (negative). We include \ac{V2G} mechanisms, allowing vehicles to inject power back into the grid. In the absence of \ac{V2G} technology, $\underline{\power}^n_t \equiv 0$. If the \ac{EV} is not plugged in at $t$, either because it is on the road or does not have access to a charging point, we have $\overline{\power}^n_t = \underline{\power}^n_t = 0$.

At each instant, the \ac{EV} also has a \ac{SoC} $\soc^n_t$, bounded by the battery capacity $\overline{\soc}^n_t$ and the minimal energy required at time $t$, $\underline{\soc}^n_t$, giving that
 \begin{equation} \label{ev_soc_bounds}%
\underline{\soc}^n_t \leq \soc^n_t \leq \overline{\soc}^n_t\enspace .
\end{equation}
 The minimal energy required at time $t$ is typically the energy required by the user to complete his or her next trip. 
 
Finally, define $\gamma^n_t \geq 0$ the energy consumed by driving during time step $t$.
We can link $\vect{\soc}^n$ and $\powerv^n$ by

\begin{equation}\label{ev_state_eq}%
\soc^n_{t} = \soc^n_{t-1} + \powerv^n_t - \gamma^n_t, \qquad \forall t\in\T 
\end{equation}
where $\soc_0^n$ is the initial state-of-charge of the given user. 
We note that $\vect{p}^n$ and $\vect{\soc}^n$ are not homogeneous up to a conversion coefficient on $\vect{p}^n$; however, this coefficient has no impact on the following mathematical results and is ignored.

We can see that \eqref{ev_soc_bounds} and \eqref{ev_state_eq} can be combined into a constraint on the cumulative injected power:
\begin{equation}\label{ev_cum_bounds}%
\underline{s}^n_t \leq \sum_{t'= 1}^t \power_{t'}^n  \leq \overline{s}^n_t \qquad \forall t\in\T \enspace ,
\end{equation}
with $\underline{s}^n_t = \underline{\soc}^n_t - \soc_0^n + \sum_{t'=1}^t \gamma^n_{t'}$ and $\overline{s}^n_t = \overline{\soc}^n_t - \soc_0^n + \sum_{t'=1}^t \gamma^n_{t'}$.

In this case, the individual \ac{EV} constraint set is defined by 
\begin{equation}
\constraintset^n = \left\{ \powerv^n \in \R^\T \middle| \powerv^n \text{ respects \eqref{ev_power_bounds} and \eqref{ev_cum_bounds}}
\right\} .
\end{equation}

\subsection{Aggregating \ac{EV} constraints}

The power profile set $\mathcal{P}$ of constraint \eqref{eq: uc_ev_constraints} arising from the flexible \acp{EV} is in fact the Minkowski sum of the individual power profile sets of the \acp{EV}: $\mathcal{P}=\minko$. In general, this set has a very rich face description whose complexity can grow quickly with $N$. This makes the \ac{UC} \eqref{eq: UC_problem} hard to solve. 

To cope with the complexity of $\mathcal{P}$, a classical method is to aggregate the constraint sets by using what we call a \emph{naive} aggregation. This naive aggregation set $\aggreg$ is defined by summing the individual constraints:
\begin{equation}\label{aggreg_set}%
\aggreg = \left\{\power\in\R^\T \middle | \begin{array}{ll}
 \sum\limits_{n\in\N}\underline{\power}^n_t \leq \power_t \leq \sum\limits_{n\in\N}\overline{\power}^n_t,  \qquad &\forall t\in\T \\
\sum\limits_{n\in\N}\underline{s}^n_t \leq \sum\limits_{t'= 1}^t \power_{t'}  \leq \sum\limits_{n\in\N}\overline{s}^n_t,  &\forall t\in\T
\end{array} \right\}
\end{equation}

Note that the aggregation method defines at most $4T$ constraints, thus $\aggreg$ has at most $4T$ facets. 

It is immediate that $\aggreg$ is an outer approximation of $\minko$.
The inverse is not true, as taking the Minkowski sum creates ``mixed'' facets,
not obtained by a naive summation.
However, we shall see that one can leverage the special character of the defining constraints~\eqref{ev_power_bounds},\eqref{ev_cum_bounds} to obtain an exact solution of the \ac{UC} problem.

\section{GENERALIZED POLYMATROIDS AND SUBMODULAR OPTIMIZATION}\label{sec:gpoly}%
In this section, we recall basic results on generalized polymatroids, based on the work of Frank \cite{frank_generalized_1984}\cite{frank_connections_2011} and Danilov and Koshevoy \cite{danilov_discrete_2004}. We also recall their application to \acp{EV} \cite{mukhi_exact_2025}. 
The considered ground set is $\T = \{1, \cdots, T\}$ where $T$ is the planning horizon.

In what follows, when considering a subset $A\subseteq \T$ and a vector $\vect{x} \in\R^\T$, we use the notation $x(A) = \sum_{a\in A} x_a$.

\subsection{Basics of generalized polymatroids}
First, we define special types of set-functions.
\begin{definition}\label{def:submodular}%
  A function $\submod : 2^\T \rightarrow \R\cup \{ +\infty\}$ 
  is
  \textbf{submodular} if
\begin{equation}\label{eq:submodular}%
\!\!\!\! \submod(A) + \submod(B) \geq \submod(A\cap B) + \submod(A\cup B), \quad \forall A, B \subseteq \T \, .
\end{equation}
 A function $\supermod :2^\T \rightarrow \R\cup \{ -\infty\}$ is \textbf{supermodular} if $-\supermod$ is submodular.

\end{definition}
A reformulation of inequality \ref{eq:submodular} gives it an interpretation of a decreasing marginal utility property.

\begin{definition}\label{def:paramodular}%
    Consider a submodular function $\submod$ and a supermodular function $\supermod$ such that $\supermod(\emptyset) = \submod(\emptyset) = 0$. The pair $(\supermod, \submod)$ is said \textbf{paramodular} if the following cross-inequality holds:
    \begin{equation}\label{eq:paramodular}%
     \!\!\!\!\!   \submod(A) - \supermod(B) \geq \submod(A\setminus B) - \supermod(B \setminus A), \quad \forall A, B \subseteq \T \, .
    \end{equation}

\end{definition}

We can note that paramodularity ensures that the space between the two functions is not empty, i.e., 
\begin{align*}
\{\vect{x}\in\R^\T \ &| x(A) \leq \submod(A),\ \forall A \subseteq \T \}\ \cap \\
& \{x\in\R^\T \ |\ \supermod(A) \leq x(A),\ \forall A \subseteq \T \} \neq \emptyset \, .
\end{align*}

Polymatroids,  introduced by Edmonds \cite{guy_submodular_1970},  are polyhedra defined by a submodular function as $\{ \vect{x} \ | x(A) \leq \submod (A), \  \forall A \subseteq \T\}$. 
This is inspired by the theory of matroids in which $f$ is the rank function \cite{whitney_abstract_1935}. 
Frank\cite{frank_generalized_1984} extended this notion to polyhedra defined by a paramodular pair, leading to the notion of {\em generalized} polymatroids. 

\begin{definition}\label{def:gpolymatroid}%
    Consider a paramodular pair of functions $(\supermod, \submod)$. The polyhedron 
    \begin{equation}\label{eq:gpolymatroid}%
     \!\!\!\!   \gpoly(\supermod,\submod) \coloneq \{x\in\R^\T \, |\, \supermod(A) \leq x(A) \leq \submod(A),\ \forall A \subseteq \T \} \!\!\!
    \end{equation}
    is a \textbf{generalized polymatroid}, or a \textbf{g-polymatroid}.
  The pair  $(\supermod,\submod)$ is called the \textbf{border pair}, with $\supermod$ the \textbf{lower border function} and $\submod$ the \textbf{upper border function}.
\end{definition}

Generalized polymatroids have also appeared in work by Danilov and Koshevoy in the broader setting of discrete convexity, dealing with polyhedra whose edges are parallel to a unimodular collection of vectors.
In particular, generalized polymatroids correspond to the special unimodular family
consisting of the vectors $e_i -e_j$ and $e_i$ for $i,j\in \T$ originating from the root system
$A_n$; we refer the reader to \cite{danilov_discrete_2004} for more information.

According to \cite[Prop. 2.5]{frank_generalized_1988}, the border pair of a given g-polymatroid $\gpoly \neq \emptyset$ is unique. In particular, we have $\submod(A) = \max_{x\in \gpoly} x(A)$ and $\supermod(A) = \min_{x\in \gpoly} x(A)$ for every subset $A \subseteq \T$. 

One of the main advantages of these polytopes is the following property:

\begin{theorem}{(See \cite[Thm. 14.2.15]{frank_connections_2011}, \cite[Sec. 7]{danilov_discrete_2004})}\label{thm:minko_sum_gpoly}
Consider paramodular pairs $(\supermod^i, \submod^i), \ i\in [k]$. Then 
\begin{equation}\label{eq:minko_sum_gpoly}
    \sum_{i} \gpoly(\supermod^i, \submod^i) = \gpoly(\sum_i \supermod^i, \sum_i \submod^i) \enspace .
\end{equation}

    In particular, g-polymatroids are stable by Minkowski sum. 
\end{theorem}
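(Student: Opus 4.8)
The plan is to prove the identity through support functions, from which both the Minkowski-sum identity and the stability claim follow at once. Recall that for a nonempty convex set $P\subseteq\R^\T$ its support function is $h_P(w)=\sup_{x\in P}\langle w,x\rangle$, that $h_{P+Q}=h_P+h_Q$ for any convex sets, and that two closed convex sets coincide as soon as their support functions coincide. Since a finite Minkowski sum of polyhedra is again a (closed) polyhedron, and each g-polymatroid is a polyhedron, it therefore suffices to establish, for every $w\in\R^\T$,
\[
  h_{\gpoly(\sum_i\supermod^i,\sum_i\submod^i)}(w)=\sum_i h_{\gpoly(\supermod^i,\submod^i)}(w) \enspace .
\]

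As a preliminary step I would check that the right-hand object is a bona fide g-polymatroid. Submodularity of $\submod$ and supermodularity of $\supermod$ are defined by inequalities that are linear in the set-function, and the paramodular cross-inequality \eqref{eq:paramodular} is likewise linear in the pair $(\supermod,\submod)$; hence finite sums preserve all three properties, and clearly $\sum_i\supermod^i(\emptyset)=\sum_i\submod^i(\emptyset)=0$. Thus $(\sum_i\supermod^i,\sum_i\submod^i)$ is paramodular and the right-hand side is well defined. The easy inclusion $\subseteq$ is then immediate: writing $x=\sum_i x^i$ with $x^i\in\gpoly(\supermod^i,\submod^i)$ and summing the bounds $\supermod^i(A)\le x^i(A)\le\submod^i(A)$ over $i$ recovers, for every $A\subseteq\T$, the inequalities defining $\gpoly(\sum_i\supermod^i,\sum_i\submod^i)$.

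The core of the argument is the greedy description of linear optimization over a g-polymatroid. For a fixed $w$, I would sort the coordinates of $\T$ by decreasing value of $w$ and show that $\max_{x\in\gpoly(\supermod,\submod)}\langle w,x\rangle$ is attained at the greedy vertex induced by this ordering: coordinates of positive weight are saturated against the upper border $\submod$ along the resulting chain of prefix sets, coordinates of negative weight against the lower border $\supermod$, the vertex being assembled by successive increments of the border values. After an Abel summation, the optimal value becomes a fixed linear combination of the numbers $\submod(A)$ and $\supermod(A)$, with $A$ ranging over the chain of sets fixed by the sorted order. The decisive observation is that this chain, and the coefficients of the combination, depend only on $w$ and not on the particular border pair. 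Consequently the optimal value is a \emph{linear} functional of $(\supermod,\submod)$, so evaluating it at the summed pair returns exactly the sum over $i$ of its values at $(\supermod^i,\submod^i)$; equivalently, the greedy vertex for the summed pair is the sum of the greedy vertices of the individual pairs computed with the same ordering, which simultaneously exhibits it as a point of $\sum_i\gpoly(\supermod^i,\submod^i)$. This gives the displayed support-function identity and hence the theorem.

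The step I expect to be the main obstacle is the correctness of the greedy algorithm for g-polymatroids, namely that the greedy vertex is both feasible and optimal. Feasibility demands the prefix bounds $\supermod(A)\le x(A)\le\submod(A)$ for \emph{all} subsets $A$, not merely for the chain sets used in the construction, and this is precisely where submodularity of $\submod$, supermodularity of $\supermod$, and the cross-inequality enter, via an uncrossing argument; optimality then follows by complementary slackness against the associated dual chain solution. Managing the interface between the positively and negatively weighted coordinates, and the coordinates of zero weight, is the delicate bookkeeping to get right, but it reduces to the two one-sided polymatroid cases once those are settled.
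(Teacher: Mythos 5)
The paper itself gives no proof of this theorem (it is quoted with citations to \cite{frank_connections_2011} and \cite{danilov_discrete_2004}), and your architecture --- paramodularity is preserved under sums since all defining inequalities are linear in the pair, the inclusion $\subseteq$ follows by summing bounds, and the reverse inclusion follows from equality of support functions because the greedy optimum is a linear functional of the border pair with coefficients depending only on the weight ordering --- is exactly the standard route behind those references, and it is sound in outline. However, the greedy vertex you describe at the crux of the argument is wrong as stated: for negative-weight coordinates you saturate the lower border $\supermod$ \emph{along the chain of prefix sets}, i.e., you assign increments $\supermod(S_j)-\supermod(S_{j-1})$. That vertex can be infeasible and its value strictly suboptimal. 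Concretely, take $\T=\{1,2\}$, $\submod(A)=10$ for all $A\neq\emptyset$, $\supermod(\{1\})=\supermod(\{2\})=0$, $\supermod(\{1,2\})=5$; one checks this pair is paramodular, so $\gpoly(\supermod,\submod)=\{x\mid 0\le x_1,x_2\le 10,\ 5\le x_1+x_2\le 10\}$. For $w=(1,-1)$ your prescription gives
\begin{equation*}
x_1=\submod(\{1\})=10,\qquad x_2=\supermod(\{1,2\})-\supermod(\{1\})=5,
\end{equation*}
so $x(\{1,2\})=15>\submod(\{1,2\})=10$ (infeasible) with claimed value $5$, whereas the true maximum is $10$, attained at $(10,0)$. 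No uncrossing argument can rescue feasibility here, because the point itself is wrong, not just the verification.

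The repair is localized and preserves your linearity argument: the lower border must be made tight along the \emph{complementary suffix} chain, not the prefix chain. With elements sorted so that $w_1\ge\cdots\ge w_k\ge 0>w_{k+1}\ge\cdots\ge w_T$ and $S_j=\{1,\dots,j\}$, the optimal value is
\begin{equation*}
\sum_{j\le k} w_j\bigl(\submod(S_j)-\submod(S_{j-1})\bigr)+\sum_{j>k} w_j\bigl(\supermod(\T\setminus S_{j-1})-\supermod(\T\setminus S_j)\bigr),
\end{equation*}
i.e., the tight constraints are $x(S_j)=\submod(S_j)$ for $j\le k$ and $x(\T\setminus S_j)=\supermod(\T\setminus S_j)$ for $j\ge k$ (in the example above this yields $x_2=\supermod(\{2\})-\supermod(\emptyset)=0$, as it should). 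The cleanest way to see this, and to avoid the two-border bookkeeping you flag as delicate, is the paper's own device: lift to the $0$-base polyhedron via \eqref{submod_to_base} (cf.~\Cref{thm:gpoly_to_base}), give the auxiliary element $T+1$ weight zero and insert it between the nonnegative and negative weights; the classical single-function greedy for base polyhedra then produces exactly the formula above, since prefix sets containing $T+1$ evaluate $\submod^*$ to $-\supermod$ of the complement. The corrected value is still linear in $(\supermod,\submod)$ with coefficients depending only on $w$, so your support-function identity, and with it the theorem, goes through unchanged.
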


The following key observation ensures that EV-flexibilities
are modeled by generalized polymatroids.

\begin{theorem}\label{thm:EV_g_poly}
For each \ac{EV} $n\in\N$, the set $\constraintset^n$ is a g-polymatroid.
\end{theorem}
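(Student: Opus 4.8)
The plan is to identify the only possible border pair as the pair of support functions of $\constraintset^n$, and then to verify the paramodular axioms for it.

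First I would record the structural observation that drives everything: the two families of defining inequalities \eqref{ev_power_bounds} and \eqref{ev_cum_bounds} are exactly lower and upper bounds on the sums $x(A)$ as $A$ ranges over the family $\mathcal{F} = \{\{t\} : t \in \T\} \cup \{\{1,\dots,t\} : t \in \T\}$ of singletons and prefixes. This family is \emph{laminar}: two prefixes are nested, two distinct singletons are disjoint, and a singleton $\{t\}$ is contained in the prefix $\{1,\dots,t'\}$ when $t \le t'$ and disjoint from it otherwise. Since the coordinate bounds \eqref{ev_power_bounds} make $\constraintset^n$ bounded, and assuming the data is consistent so that $\constraintset^n \neq \emptyset$, the set functions
\[
  \submod(A) = \max_{x \in \constraintset^n} x(A), \qquad \supermod(A) = \min_{x \in \constraintset^n} x(A)
\]
are finite for every $A \subseteq \T$. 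By \cite[Prop. 2.5]{frank_generalized_1988} these are the only possible border functions, so it suffices to prove (a) that $\constraintset^n = \gpoly(\supermod,\submod)$ and (b) that $(\supermod,\submod)$ is paramodular.

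Part (a) is the easy half. The inclusion $\constraintset^n \subseteq \gpoly(\supermod,\submod)$ is immediate from the definition of $\submod,\supermod$ as a maximum and a minimum. For the reverse inclusion I would use that $\mathcal{F}$ already contains all singletons and all prefixes: any $x \in \gpoly(\supermod,\submod)$ satisfies $x(\{t\}) \le \submod(\{t\}) \le \overline{\power}_t^n$ and $x(\{1,\dots,t\}) \le \submod(\{1,\dots,t\}) \le \overline{s}_t^n$, the last inequality in each case holding because every feasible point already respects the original bound, so the support function cannot exceed it; together with the analogous lower bounds through $\supermod$ this recovers \eqref{ev_power_bounds}–\eqref{ev_cum_bounds}, hence $x \in \constraintset^n$.

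Part (b), paramodularity of $(\supermod,\submod)$, is the main obstacle, because it must hold for \emph{all} $A, B \subseteq \T$ whereas the geometry is only prescribed on the laminar family $\mathcal{F}$. Here I would exploit the change of variables $y_t = x(\{1,\dots,t\})$ (with $y_0 = 0$), under which the prefix-sum map $\Phi : x \mapsto y$ is unimodular (lower-triangular with unit diagonal): constraint \eqref{ev_cum_bounds} becomes the box $\underline{s}_t^n \le y_t \le \overline{s}_t^n$ and \eqref{ev_power_bounds} becomes the difference bounds $\underline{\power}_t^n \le y_t - y_{t-1} \le \overline{\power}_t^n$. Two routes then close the argument. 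The geometric route computes the edges of $\constraintset^n$: along an edge $T-1$ active constraints stay tight, and the consecutive-ones (interval) structure of the normals $e_t$ and $\mathbf{1}_{\{1,\dots,t\}}$ forces every edge direction to be $\pm e_i$ or $\pm(e_i - e_j)$; as these are precisely the root vectors of the system $A_n$, the characterization of Danilov and Koshevoy \cite{danilov_discrete_2004} identifies $\constraintset^n$ as a g-polymatroid, after which (a)–(b) are automatic by uniqueness of the border pair. The combinatorial route instead evaluates the support functions explicitly: grouping $A$ into its maximal runs $\{a_i,\dots,b_i\}$ yields, for the cumulative constraints alone, the closed form $\submod(A) = \sum_i (\overline{s}_{b_i}^n - \underline{s}_{a_i-1}^n)$ (with the convention $\underline{s}_0^n = \overline{s}_0^n = 0$); since g-polymatroids are stable under intersection with a box \cite{frank_generalized_1988}, incorporating the coordinate bounds \eqref{ev_power_bounds} preserves the structure, and submodularity of $\submod$, supermodularity of $\supermod$, and the cross-inequality \eqref{eq:paramodular} follow from a short uncrossing computation. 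In either route the crux is identical: the laminar/interval structure is exactly what allows an exchange argument to build feasible points for $A \cap B$ and $A \cup B$ out of optimizers for $A$ and $B$, which is the content of paramodularity.
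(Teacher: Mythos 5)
Your proposal is correct, and it begins from exactly the structural observation the paper's proof hinges on: the defining inequalities \eqref{ev_power_bounds} and \eqref{ev_cum_bounds} are bounds on $x(A)$ over the laminar family $\mathcal{F}$ of singletons and prefixes. From there, however, the routes genuinely diverge. The paper's proof is a one-step citation: for an \emph{arbitrary} set function (no submodularity required), a polyhedron given by bounds over a laminar family is a g-polymatroid (it cites \cite[Sec. 3]{borsik_prefix-bounded_2025} and \cite[Sec.~6]{danilov_discrete_2004}), so laminarity alone closes the argument, and no paramodular pair is ever exhibited — the tight border pair appears only afterwards as a normalization. You instead re-derive this fact for the case at hand, twice over. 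Your geometric route is sound: the block-counting argument between consecutive tight prefixes does force every edge direction into $\{\pm e_i, \pm(e_i - e_j)\}$ (the dimension count $\sum_i(m_i-1)+m_0=1$ leaves exactly the two cases $e_i-e_j$ and $e_i$), boundedness comes from \eqref{ev_power_bounds}, and the Danilov--Koshevoy edge characterization — which the paper itself recalls when discussing the unimodular system of $A_n$ — then applies. Your combinatorial route is also viable: the closed form $\submod(A)=\sum_i(\overline{s}^n_{b_i}-\underline{s}^n_{a_i-1})$ is exact because maximal runs are separated, so in the prefix coordinates $y_t$ the maximization decouples over a box, and the intersection-with-a-box stability is indeed in Frank--Tardos. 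What the paper's route buys is brevity and generality (any set function on any laminar family); what yours buys is explicitness, producing the border functions that the paper only posits via its tightness assumption. Two minor caveats: both arguments need $\constraintset^n\neq\emptyset$, which you correctly flag but the theorem statement leaves implicit; and your ``short uncrossing computation'' for paramodularity is asserted rather than carried out — it is routine given your closed form, but it is the one place where your write-up, unlike the edge-direction route, is not yet a complete proof.
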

\begin{proof}
Koshevoy \cite{koshevoy_personnal_2025} pointed out to us that 
this follows from the laminar character of the family of constraints, using the properties given in Examples 5 and 12 of \cite{danilov_discrete_2004}. Mukhi et al.~\cite{mukhi_exact_2025} provide a different proof based on an explicit computation of the border functions.
\end{proof}

Without loss of generality, we assume that the initial collection of constraints \eqref{ev_power_bounds} and \eqref{ev_cum_bounds} is tight. By this, we mean that, considering the border pair
$(\supermod^n, \submod^n)$ such that $\constraintset^n = \gpoly(\submod^n, \supermod^n)$, we have $\submod^n(\{t\}) = \overline{p}^n_t$ and $\supermod^n(\{t\}) = \underline{p}^n_t$, as well as $\submod^n(\{1, \cdots, t\}) = \overline{s}^n_t$ and $\supermod^n(\{1, \cdots, t\}) = \underline{s}^n_t$ for each $t\in\T$.   We denote by $(\supermod,\submod) = (\sum_n \supermod^n, \sum_n\submod^n)$ the border function of the Minkowski sum $\sum_n \mathcal{P}^n$,
which is a g-polymatroid, by~\Cref{thm:minko_sum_gpoly} and~\Cref{thm:EV_g_poly}.

\begin{remark}

When optimizing a linear cost over g-polymatroids, the greedy algorithm gives an exact solution
(see \cite{frank_connections_2011}). This setting is described in \cite[Sec. IV]{mukhi_exact_2025} but does not suffice when the g-polymatroid is integrated into a larger model in the \ac{UC} problem. 
\end{remark} 

To tackle the larger \ac{UC} problem, we rely on
base polyhedra, which are special cases of g-polymatroids.

\begin{definition}\label{def:base_polyhedron}
Consider $\tilde{\T} =\mathcal{T} \cup \{T+1\}$ and a submodular function $\submod$ such that $\submod(\tilde{\T})$ is finite. The \textbf{base polyhedron} with border function $\submod$ is the set 
\begin{equation}\label{eq:base_polyhedron}
    \basepoly(\submod) \coloneq \left\{x\in\R^{\tilde{\T}} \ \middle | 
\begin{array}{l}
x(\tilde{\T}) = \submod(\tilde{\T}) \\
x(A)\leq \submod(A),\  \forall A\subseteq \tilde{\T} \end{array}
 \right\} \enspace .
\end{equation}
A \textbf{0-base polyhedron} is a base polyhedron such that $\submod(\tilde{\T})~=~0$.
\end{definition}

We can easily show that a base polyhedron is a g-polymatroid by taking $\supermod(A) \coloneq \submod(\tilde{\T}) -\submod(\tilde{\T} \setminus  A)$ \cite{frank_connections_2011}. 
In addition, it has been shown that every g-polymatroid is the projection of a 0-base polyhedron \cite[Thm. 14.2.5]{frank_connections_2011}. 
Therefore, 
optimizing over a g-polymatroid reduces to optimizing over the corresponding base polyhedron.
These polyhedra can easily be described as, given a paramodular pair $(\supermod, \submod)$, the submodular function defining the base polyhedron is given by  
\begin{equation}\label{submod_to_base}
\submod^*(A) = \left\{\begin{array}{ll}
        \submod(A) & \text{ if } A\subseteq \T \\
        -\supermod(\T \setminus A) & \text{ if } T+1 \in A 
    \end{array} \right. \enspace .
\end{equation}

\section{SOLVING THE \ac{UC} PROBLEM WITH \acp{EV}}\label{sec-theo-results}

We next show that the \ac{UC} problem integrating \ac{EV} charging flexibilities can be solved efficiently using an approach combining cutting planes and submodular optimization.
In \Cref{sec-theory}, we provide a theoretical complexity result.
Furthermore, we give in \Cref{sec-practical}
a practical cutting-plane approach, used to solve our real instances.

\subsection{Polynomial-time solvability of the convex \ac{UC} problem}\label{sec-theory}

In this subsection, we suppose that the constraint sets $\prodconstraintset^m$
are polyhedra given by collections of linear inequalities,
so  that the production $z^m$ of unit $m$ must belong
to the set
$\prodconstraintset^m=\{z^m\in \R^\T\mid F^m z^m\leq b^m\}\subset \mathbb{R}^{\mathcal{T}}$,
where $F^m$ is a $r_m\times T$ matrix and $b^m$ is a column vector of dimension $r_m$,
where $r_m$ is the number of inequalities. 
This allows both for bound constraints and dynamic linear constraints on the production
but excludes minimal on/off duration constraints for nuclear plants
(which lead to nonconvex -- in fact \ac{MILP} -- formulations). We refer to
this problem as the {\em convex} \ac{UC} problem. 
The convexity assumption is useful to get a theoretical complexity result; it will
be relaxed in the practical method of \Cref{sec-practical}.

Our approach relies on the analysis of linear programs defined implicitly by separation oracles, which goes back to the work of Gr\"{o}tschel, Lovász and Schrijver~\cite{grotschel_geometric_1988}. To obtain a tighter explicit bound, we use a recent
improvement of their approach~\cite{dadush_finding_2022} relying on
the cutting-plane method of~\cite{jiang_improved_2020}.

A key notion is that of a {\em polyhedral separation oracle}~\cite{dadush_finding_2022}. Given a polyhedron $P$ in $\R^d$ defined by a finite collection of inequalities $(G_ix\leq h_i)_{i\in \mathcal{I}}$,
with $G_i \in \R^{1\times d}$, $h_i \in \R$,
a {\em polyhedral separation oracle} is an algorithm
which, given a vector $\vect{x}\in\R^D$, asserts that $\vect{x}\in P$ or returns
a violated inequality, i.e., an index $i\in \mathcal{I}$ such that $G_i \vect{x}>h_i$.
Note that this notion is more demanding than the classical notion
of strong separation oracle, used in~\cite{grotschel_geometric_1988}.
The number of inequalities $|\mathcal{I}|$ is allowed to be exponential in $d$.

We work in the Turing model of computation, so we assume that all of the problem inputs are rational vectors. Without loss of generality, after multiplying by common denominators, we will even assume that the entries are integers. Then, we denote by $B$ the maximal absolute value of all the integer coefficients appearing in the problem input, i.e., of the entries of the matrix $F^m$, of the vector $\vect{b}^m$, and of the bounds
$\underline{s}^n_t$, $\overline{s}^n_t$, $\underline{p}^n_t$ and $\overline{p}^n_t$. The {\em dimension}
(total number of scalar variables) is $d = (|\mathcal{M}| + 1)T$.
The following theorem, along with its corollary below, results from that of Dadush et al.~\cite[p.~2703, para.~2, line~4]{dadush_finding_2022} and
is proved in an extended version of this paper.  

\begin{theorem}\label{th-complexity}
  The convex \ac{UC} problem integrating a collection of $N$
  flexible \acp{EV} can be solved in a 
  number of polyhedral separation oracle queries bounded by $O(d^3\log (dBN))) $, with $d$ the dimension and $B$ the maximal absolute value of integer coefficients of the problem input.
\end{theorem}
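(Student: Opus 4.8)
The plan is to recognize the feasible set of the convex \ac{UC} problem as a single polyhedron $P$ in $\R^{d}$, with $d=(|\mathcal{M}|+1)T$, and to minimize the linear objective $\sum_{m\in\mathcal{M},\,t\in\T}c^m_t z^m_t$ over $P$ using the linear-programming-via-separation machinery of Dadush et al.~\cite{dadush_finding_2022}, which builds on~\cite{grotschel_geometric_1988,jiang_improved_2020}. That framework solves an \ac{LP} accessed through a polyhedral separation oracle in $O(d^3\log(d\Delta))$ oracle queries, where $\Delta$ bounds the absolute values of the integer coefficients of the (implicit) inequality description of $P$. The proof then reduces to (i) exhibiting such a polyhedral separation oracle for $P$, and (ii) bounding $\Delta$ so that $\log(d\Delta)=O(\log(dBN))$. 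By \Cref{thm:EV_g_poly} and \Cref{thm:minko_sum_gpoly}, the aggregated set $\mathcal{P}=\minko=\gpoly(\supermod,\submod)$ with $\supermod=\sum_n\supermod^n$ and $\submod=\sum_n\submod^n$, so that
\begin{equation*}
P=\left\{\bigl((z^m)_m,p\bigr)\ \middle|\
\begin{array}{l}
F^m z^m\le b^m,\ \forall m\in\mathcal{M},\\
\supermod(A)\le p(A)\le \submod(A),\ \forall A\subseteq\T,\\
\sum_m z^m_t=D_t+p_t,\ \forall t\in\T
\end{array}\right\},
\end{equation*}
where the demand equalities are read as two opposite inequalities, and where the only exponentially large family is the generalized-polymatroid one.

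Next I build the polyhedral separation oracle. Given $x=((z^m)_m,p)$, checking each row of each system $F^m z^m\le b^m$ and each demand equality is immediate and returns a violated index when one exists. For the g-polymatroid part, observe that $A\mapsto \submod(A)-p(A)$ is submodular, since $\submod$ is submodular and $A\mapsto p(A)$ is modular; minimizing it by submodular function minimization returns a minimizer $A^\star$, and $\submod(A^\star)-p(A^\star)<0$ certifies --- and, crucially, returns --- the violated inequality $p(A^\star)\le\submod(A^\star)$. Symmetrically, $A\mapsto p(A)-\supermod(A)$ is submodular (as $-\supermod$ is submodular), and its minimization detects a violated lower inequality $\supermod(A)\le p(A)$. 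Each value $\submod(A)=\sum_n\max_{p^n\in\mathcal{P}^n}p^n(A)$ and $\supermod(A)=\sum_n\min_{p^n\in\mathcal{P}^n}p^n(A)$ is evaluated by the greedy algorithm on each $\mathcal{P}^n$ (see~\cite{frank_connections_2011}), which supplies the value oracle required by submodular function minimization.

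It remains to bound the coefficients. The left-hand sides of the g-polymatroid inequalities are $0/1$ indicator vectors of subsets $A$, the demand constraints have coefficients in $\{0,\pm1\}$, and the entries of $F^m,b^m$ are bounded by $B$. For the right-hand sides, since $\underline{p}^n_a\le p^n_a\le\overline{p}^n_a$ with $|\underline{p}^n_a|,|\overline{p}^n_a|\le B$, every coordinate of every $\mathcal{P}^n$ has absolute value at most $B$, whence $|\submod(A)|\le NTB$ and likewise $|\supermod(A)|\le NTB$; integrality of the bounds makes $\submod,\supermod$ integer-valued, and the demand right-hand sides $D_t$ are integral of size $O(B)$, a matter of scaling. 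Therefore $\Delta=O(NTB)$, and since $T\le d$ we get $\log(d\Delta)=O(\log(d\cdot NdB))=O(\log(dBN))$. Applying~\cite{dadush_finding_2022} to $P$ with the oracle above then produces an exact optimal vertex (and detects infeasibility or unboundedness within the same bound) in $O(d^3\log(d\Delta))=O(d^3\log(dBN))$ polyhedral separation oracle queries, which is the claim.

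I expect the main obstacle to be the second step: turning the g-polymatroid separation into a genuine \emph{polyhedral} oracle, i.e.\ one that returns an explicit violated subset $A\subseteq\T$ rather than merely asserting infeasibility. This is precisely where submodularity is indispensable --- it is what guarantees that the single minimization $\min_{A}\bigl(\submod(A)-p(A)\bigr)$ identifies the most-violated inequality together with its index. A secondary technical point requiring care is reconciling the demand-balance equalities and the possible unboundedness or low dimension of $P$ with the hypotheses of the Dadush et al.\ theorem, together with the coefficient bound $\Delta=O(NTB)$, whose linear growth in $N$ (through $\submod=\sum_n\submod^n$) is exactly what yields the $\log N$ dependence in the final query count.
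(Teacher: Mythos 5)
Your proposal is correct and takes essentially the same approach as the paper: both reduce the convex \ac{UC} to linear programming over a single polyhedron accessed via a polyhedral separation oracle, invoke the algorithm of Dadush et al.~\cite{dadush_finding_2022}, and extract the $\log N$ dependence by bounding the integer coefficients of the implicit description of the aggregated g-polymatroid by $O(\mathrm{poly}\cdot BN)$ --- the paper merely makes explicit the intermediate step via the condition number $\delta \geq 1/(\bar{B}^{2d}d^{d+1})$ with $\bar{B}\leq BN$, which your combined $O(d^3\log(d\Delta))$ citation absorbs. Your only (harmless) deviation is implementing the g-polymatroid separation by two submodular minimizations over $(\supermod,\submod)$ rather than the paper's single \ac{SFM} on the lifted base polyhedron $\basepoly(\submod^*)$, a detail the paper in any case defers to \Cref{thm_polyhedral_sep} rather than placing it in the proof of the theorem itself.
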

\begin{remark}
This bound is independent of the bit-size of of the cost vector $\vect{c}^m$. 
\end{remark}
Such an oracle can be implemented
efficiently. Given a vector $x=( (z^m)_{m\in \mathcal{M}}, p)$, 
checking the validity of the inequalities $F^m z^m \leq b^m$ or of the global demand
constraint is immediate (there are $\sum_m r_m + T$ scalar constraints that can be checked
one by one). 
The \ac{EV} polyhedral separation oracle deciding whether $p$ belongs to the Minkowski sum $\sum_n \mathcal{P}^n$,  which is decribed by $2^{T+1}$ g-polymatroid type inequalities. is implemented by a reduction to \ac{SFM}. In fact, since $\minko$ is a g-polymatroid, define $(\supermod, \submod)$ the corresponding border pair. We can then define $\submod^*$ as in \eqref{submod_to_base} and $\basepoly(\submod^*)$ the corresponding 0-base polyhedron. For a vector $\vect{x}\in \R^\T$, define the extended vector $\vect{x}^* = (x_1, x_2, \dots, x_T, -\sum_{t\in\T} x_t) \in \R^{\T+1}$. 
In this case, 
\begin{equation}\label{separation_equiv}
\vect{x}\in\minko \iff \vect{x}^* \in\basepoly(\submod^*) \enspace ,
\end{equation} 
meaning that the membership problem of $\minko$ is equivalent to that of $\basepoly(\submod^*)$, and the separation problem for $\minko$ can be deduced from that for $\basepoly(\submod^*)$.

Given a vector $\powerv\in\R^\T$, the separation problem is given by minimizing the submodular function $\submod^* - \powerv^*$. In fact, if for a minimal subset $A$, $(\submod^* - \powerv^*)(A) \geq 0$, then $\powerv^*\in\basepoly(\submod^*)$ and $\powerv \in \minko$, thus the solution is an exact aggregation. If $(\submod^* - \powerv^*)(A) < 0$, then the constraint $\powerv^*(A) \geq \submod^*(A)$ is violated and yields a cutting plane. The corresponding cut for $\minko$ can be deduced from \eqref{submod_to_base}.

Applying \Cref{th-complexity} and bounds on the resolution of \ac{SFM} \cite{orlin_faster_2009} yields the following result.
\begin{corollary}\label[corollary]{coro-synthesis}
  The convex \ac{UC} problem integrating $N$ flexible \acp{EV} can be solved
  in $N\log N \operatorname{poly}(T,d, \log B, r)$ arithmetic operations,
  where $r=\sum_m r_m$ is the total number of constraints satisfied by production units, and $\operatorname{poly}(\cdot)$ 
  denotes a function polynomially bounded in its arguments.
\end{corollary}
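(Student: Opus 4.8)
The plan is to combine the bound on the number of oracle queries from \Cref{th-complexity} with the arithmetic cost of answering a single query, tracking the dependence on $N$ in each factor. The total running time is the product of the number of separation-oracle calls, $O(d^3\log(dBN))$, and the cost of one query. I would split the latter into a trivial part—checking the $\sum_m r_m + T$ explicit inequalities describing the production constraints $F^m z^m\le b^m$ and the demand balance, which costs $O(rT)$ operations—and the nontrivial part, namely deciding membership in the Minkowski sum $\minko$ via the reduction to \ac{SFM} of \Cref{thm_polyhedral_sep}.

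The crux is to control the two $N$-dependent quantities appearing in \Cref{thm_polyhedral_sep}: the evaluation cost $E(f^*)$ of the base-polyhedron function and its magnitude $W$. Since the border pair of $\minko$ is $(\supermod,\submod)=(\sum_n g^n,\sum_n f^n)$, evaluating $f^*(A)$ reduces to evaluating $\sum_n f^n(A)$ (or the analogous sum of the $g^n$). Each individual $f^n(A)=\max_{x\in\mathcal{P}^n}x(A)$ is the value of a linear form over the laminar g-polymatroid $\mathcal{P}^n$, hence computable in $\operatorname{poly}(T)$ arithmetic operations by the greedy algorithm; the ``mixed facets'' phenomenon means this sum does not collapse to a function of the aggregated laminar bounds, so one genuinely evaluates the $N$ summands. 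This gives $E(f^*)=O(N\,\operatorname{poly}(T))$, which is the sole source of a linear factor in $N$. For the magnitude, each $|f^n(A)|$ is bounded by $\operatorname{poly}(T)\cdot B$, so $W\le O(N\,\operatorname{poly}(T)\,B)$ and therefore $\log(TW)=O(\log N+\log(TB))$.

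I would then plug these into the strongly polynomial variant of \Cref{thm_polyhedral_sep}, whose cost $O(T^5E(f^*)+T^6)=O(N\,\operatorname{poly}(T))$ is free of the $\log W$ factor; this choice is what I expect to be the key subtlety, since using the weakly polynomial bound would contribute a second logarithmic factor $\log(TW)=\Theta(\log N)$ which, multiplied by the $\log(dBN)$ in the query count, would produce an unwanted $(\log N)^2$ rather than the claimed single $\log N$. Multiplying the per-query cost $O(N\,\operatorname{poly}(T))+O(rT)$ by the $O(d^3\log(dBN))$ queries, and using $\log(dBN)=\log N+\log(dB)$ to split off a single $\log N$ factor (absorbing the remaining $\log(dB)$ into $\operatorname{poly}(d,\log B)$), yields the announced bound $N\log N\,\operatorname{poly}(T,d,\log B,r)$. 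The only remaining routine points are that reading and aggregating the $N$ input profiles costs $O(NT)$, dominated by the above, and that the bit-sizes stay polynomially controlled because \Cref{th-complexity} already counts work in oracle queries.
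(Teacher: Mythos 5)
Your proposal is correct and follows the same overall route as the paper: multiply the $O(d^3\log(dBN))$ oracle-call bound of \Cref{th-complexity} by a per-call cost of $O(N\,\operatorname{poly}(T)+r)$, obtained by evaluating $\submod^*(A)=\sum_n \submod^{*,n}(A)$ summand by summand, and split $\log(dBN)$ to extract the single $\log N$. Your identification of the key subtlety is also exactly what the paper does implicitly: its proof quotes the strongly polynomial form $O(T^5 E(\submod^*)+T^6+r)$ of \Cref{thm_polyhedral_sep}, precisely avoiding the $\log(TW)=\Theta(\log N)$ factor that the weakly polynomial variant would contribute. The one place you diverge is the evaluation oracle: the paper computes each $\submod^{*,n}(A)=\max_{x\in\basepoly(\submod^{*,n})}x(A)$ by solving an explicit \ac{LP} with $T$ variables and $O(T)$ constraints via interior-point methods together with Ye's exact-rounding result, at cost $O(T^{4.5}\log B)$ per summand, whereas you invoke ``the greedy algorithm'' over the g-polymatroid $\mathcal{P}^n$. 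Be careful here: the classical greedy for g-polymatroids presupposes an evaluation oracle for the border pair $(\supermod^n,\submod^n)$, which is exactly the quantity you are trying to compute, so the claim as stated is circular. It is repairable—because the constraints form a laminar family, $f^n(A)$ can be computed combinatorially from the prefix and box bounds (or one simply falls back on the paper's \ac{LP} route, whose $\log B$ factor is harmless since the corollary allows $\operatorname{poly}(\log B)$)—but as written this sub-step is under-justified. Everything else, including the $O(rT)$ cost of the explicit production and demand constraints and the final assembly, matches the paper's argument.
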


\subsection{The practical cutting-plane algorithm}\label{sec-practical}
Having established the theoretical complexity in \Cref{coro-synthesis}, we present here a practical algorithm to solving \ac{UC} problem with an exact aggregation of \ac{EV} constraint sets.
The algorithm, described in \Cref{cutting_planes}, iteratively solves the \ac{UC} problem \eqref{eq: UC_problem} while enriching the constraint set at each iteration. We build 
a decreasing sequence of polytopes $$\constraintset^{(0)} \supset \constraintset^{(1)}\supset \cdots$$ such that $\constraintset^{(0)} = \aggreg$ the naive aggregation polytope \eqref{aggreg_set},
$\constraintset^{(\infty)} \coloneqq \cap_{k\in\mathbb{N}}\constraintset^{(k)}= \minko$ the exact polytope, and each intermediate polytope has a richer constraint dictionary than the previous one. 

Additionally, consider, for $k\in\mathbb{N}$, the problem 
\begin{tagsubequations}{$UC_k$}
\begin{alignat}{4}
& \min_{\prodvarv, \powerv}& \ \sum_{m\in\mathcal{M}, t\in\T} c_t^m \prodvar_t^m && \label{eq: uck_objective}\\
& \text{s.t.}   & \eqref{eq: uc_demand}, \ \eqref{eq: uc_prod_constraints}&&\\
& \quad &  \powerv \in \constraintset^{(k)} && \label{eq: uc_evk_constraints}
\end{alignat}
\label{uc_iteration_k}%
\end{tagsubequations}
i.e., the \ac{UC} problem with the enriched polytope of iteration $k$. After solving problem \eqref{uc_iteration_k}, we can apply the separation oracle described in \Cref{sec-theory}
to the \ac{EV} solution $\powerv_k$: if the solution is an exact aggregation, the algorithm stops. Otherwise, the separation oracle returns a violated constraint which can be added to build
the enriched constraint set $\constraintset^{(k+1)}$. In practice, running the \ac{SFM} algorithm allows for far more constraints to emerge as it must evaluate $\submod^*$ repeatedly, 
and we use the whole collection of cuts obtained in this way, all at once, to enrich the constraint set, passing from $\constraintset^{(k)}$ to  $\constraintset^{(k+1)}$.

\begin{algorithm}
\caption{A cutting-plane resolution of \ac{UC}}\label{cutting_planes}
$\constraintset^{(0)} \gets \aggreg$\;
$k \gets 0$\;
\For{$k \geq 0$}{
Solve \eqref{uc_iteration_k} and retrieve optimal $\powerv_k$ \;
Get $A \in \argmin (\submod^* - \powerv_k^*)$ by SFM \;
Get $\mathcal{C}_k$ polyhedron defined by facets calculated during SFM \;
\eIf{ $(\submod^* - \powerv_k^*)(A) \geq 0$}{
	$\powerv_k$ is disaggregeable: STOP \;
}{$\constraintset^{(k+1)} = \constraintset^{(k)} \cap \mathcal{C}_k$
}} 
\end{algorithm}

\begin{prop}\label{prop_termination}
  \Cref{cutting_planes} terminates and returns an optimal solution.
\end{prop}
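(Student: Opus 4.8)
The plan is to show two things: termination (the algorithm stops after finitely many iterations) and correctness (upon stopping, the returned solution is optimal for the full \ac{UC} problem \eqref{eq: UC_problem}). I would first address correctness, since it is conceptually cleaner, and then termination, which is where the main obstacle lies.

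For correctness, observe that at every iteration $k$ we have $\minko = \constraintset^{(\infty)}\subseteq \constraintset^{(k)}$, because each cut $\mathcal{C}_k$ added is a valid inequality for $\minko$. Indeed, by the construction in the proof of \Cref{thm_polyhedral_sep}, every facet produced during the \ac{SFM} computation is of the form $\power(A)\leq \submod(A)$ or $\power(A)\geq\supermod(A)$, which is satisfied by every point of $\minko=\gpoly(\supermod,\submod)$ by \Cref{def:gpolymatroid}. Hence problem \eqref{uc_iteration_k} is a \emph{relaxation} of \eqref{eq: UC_problem}, and its optimal value is a lower bound on the optimal value of \eqref{eq: UC_problem}. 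Now when the stopping test $(\submod^*-\powerv_k^*)(A)\geq 0$ succeeds, the separation oracle of \Cref{thm_polyhedral_sep} certifies via \eqref{separation_equiv} that $\powerv_k\in\minko$, so the optimal solution $((\prodvarv^m)_m,\powerv_k)$ of the relaxation \eqref{uc_iteration_k} is in fact feasible for \eqref{eq: UC_problem}. A feasible point of \eqref{eq: UC_problem} attaining the lower bound given by the relaxation is necessarily optimal; this proves correctness.

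For termination, the key fact I would invoke is that $\minko$ is a polytope with \emph{finitely many} facets, each of g-polymatroid type $\power(A)\lessgtr\submod(A)$ (resp. $\supermod(A)$) for some $A\subseteq\T$. The cut returned by \ac{SFM} at a minimizing set $A$ with $(\submod^*-\powerv_k^*)(A)<0$ is a genuine facet-defining inequality of $\minko$ that is violated by $\powerv_k$; since $\powerv_k\in\constraintset^{(k)}$, this inequality is not already implied by the description of $\constraintset^{(k)}$, so the inclusion $\constraintset^{(k+1)}\subsetneq\constraintset^{(k)}$ is strict in a meaningful sense. I would argue that each iteration adds at least one facet of $\minko$ that was absent from $\constraintset^{(k)}$, and since $\minko$ has only finitely many facets, at most that many iterations can occur before $\constraintset^{(k)}$ contains a complete facet-description of $\minko$ on the relevant region, at which point the optimal $\powerv_k$ must lie in $\minko$ and the stopping test fires.

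The main obstacle is making the termination argument fully rigorous: one must confirm that the cut extracted at the minimizing set $A$ is indeed \emph{new}, i.e. not redundant given $\constraintset^{(k)}$, so that no iteration is wasted. This requires checking that because $\powerv_k$ strictly violates the returned inequality while satisfying all constraints defining $\constraintset^{(k)}$, the new inequality cannot be a consequence of the old ones. I would phrase this as: the separation oracle returns a hyperplane strictly separating $\powerv_k$ from $\minko$, hence from $\constraintset^{(k)}\cap\{$that halfspace$\}$, guaranteeing strict progress. Combined with the finiteness of the facet set of $\minko$ (each cut corresponds to one of the $\le 2^{T+1}$ candidate subsets $A$), this bounds the number of iterations and yields termination. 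The remaining details — that \ac{SFM} always returns a violated inequality whenever $\powerv_k\notin\minko$, and that the extracted cut is facet-valid for $\minko$ — follow directly from \Cref{thm_polyhedral_sep} and its proof.
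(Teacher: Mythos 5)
Your proposal is correct and follows essentially the same route as the paper: the iterates solve relaxations of \eqref{eq: UC_problem} since every added cut is valid for $\minko$, so the certified-feasible final point attains a lower bound and is optimal, while termination follows because each non-terminal iteration adds a violated (hence new, not implied by $\constraintset^{(k)}$) inequality drawn from the finite family of at most $2^{T+1}$ g-polymatroid constraints. One small caveat: your claim that the returned cut is \emph{facet-defining} for $\minko$ is an overstatement (the violated inequality $\power(A)\lessgtr \submod(A),\supermod(A)$ need not be a facet), but this is harmless since your counting argument runs over the $2^{T+1}$ candidate subsets $A$ rather than over facets, exactly as in the paper.
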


This property results from the g-polymatroid structure, which ensures that $\minko$ is defined by at most $2^{T+1}$ constraints.

\section{NUMERICAL RESULTS OF THE CUTTING-PLANE ALGORITHM}\label{sec-benchmark}

We next present a realistic instance of the European grid and the French \ac{EV} fleet, as well as some numerical results of \Cref{cutting_planes} applied to this instance.

\subsection{The European Grid Instance}\label{sec-eraa-data}

Following the EU Electricity Regulation in 2019, the \ac{ENTSO-E} began work on the \ac{ERAA} as part of a toolbox to improve comprehension of the European power system and ensure that investments and regulations today are coherent with future needs \cite{entso-e_eraa_nodate}.   

In particular, the \ac{ERAA} report provides a relatively complete and open-access dataset on the European instance, describing various aspects of the grid by electricity market node.  In this paper, we exploit the data of the 2023.2 edition.

Using this data, we were able to implement a seven-country \ac{UC} problem centered around the French market node. In this problem, we consider a total of 64 aggregated production units (nuclear, thermal, and hydroelectric plants), renewable energy sources being integrated to the residual demand.

\subsection{Modeling the French fleet of \acp{EV}}

In order to simulate the aggregation of \acp{EV} in the \ac{UC} problem, we used the \ac{EV} data of the \textit{Enquête de Mobilité des Personnes} \cite{ministere_de_lenvironnement_sdes_enquete_2021}, a survey on the behavior of French drivers. From this survey, we can extract different user profiles and generate realistic individual weekly driving schedules with potential charging slots and driving distance, thus energy need. This data directly supplies us with different profiles of \ac{EV} constraint sets described in \Cref{sec-ev_constraints}. 

In order to correctly size the proportion of \acp{EV} present in the \ac{UC} problem, we rely on predictions by RTE \cite{rte_integration_2019}. In this report, several growth scenarios are presented, with the \textit{Crescendo} being the most likely. Considering this scenario, there will be 8.5 million \acp{EV} in France in 2030, 60\% of which should subject themselves to smart charging and supposed here made available in the \ac{UC} problem. As such, we consider a fleet of 5.1 million "controllable" \acp{EV}, which each belong to one of $N$ different profiles chosen at random. 

Finally, we choose $\soc_0$ the initial \ac{SoC} and $\soc_T$ the target \ac{SoC} (minimal level for the end of the optimization horizon demanded by each user) such that the total \ac{EV} power consumption should fall around $3-4\%$ of the total demand of the period.

\subsection{Implementation}

The \ac{UC} problem is implemented in Python 3.12 as a \ac{LP} and solved by \textit{CPLEX} 22.1.2.
The separation oracle is implemented in Julia 1.9.4 and is based on an existing \ac{SFM} package
\cite{lock_submodularminimizationjl_2025}, which implements the Fujishige-Wolfe minimum norm point algorithm \cite{fujishige_submodular_2009} as described by Chakrabarty et al. \cite{chakrabarty_provable_2014}. Though the complexity of this algorithm is theoretically worse than that of \cite{iwata_simple_2009}, it is shown to outperform the latter algorithm in practice.

Finally, the evaluation oracle of $\submod^* - \powerv^*_k$ is implemented as described in the proof of \Cref{coro-synthesis}.
In practice, we implemented a simple parallelization of the calculation of the $N$ \acp{LP} using multithreading.

\subsection{Benchmark}

In order to evaluate the performance of \Cref{cutting_planes}, simulations were run on the projected \ac{ERAA} data from the week, starting on Monday, of January 13, 2030 with the climatic conditions of 2003 (see \Cref{sec-eraa-data}),
over horizons $T\in \{24, 48, 96, 168\}$ and considering different numbers of \ac{EV} user profiles $N\in\{2, 10, 50, 100\}$, where the population of 5.1 million \acp{EV} is evenly distributed among profiles. These simulations were run on 24 cores of an Intel® Xeon® Platinum 8260 processor, with 10 runs for $T=24, 48, 96$ and 5 for $T=168$ \footnote{The case $T=168,\ N=100 $ is not included in the figures because the solver crashed during execution. We suspect that this is due to a known memory leak of the Julia 1.9 using multithreading on Linux.}, each run differing in the selected \ac{EV} profiles.

In this paper, we report numerical results in the convex \ac{UC} setting, corresponding to the available public data, though tests on the "full" \ac{UC} problem (\ac{MILP} model integrating dynamic on/off constraints) yielded similar performance.

These simulations show \Cref{cutting_planes} to terminate in few iterations, as illustrated by \Cref{fig_benchmark_av_iterations}. In fact, all instances required fewer than $8$ iterations to terminate, and most fewer than $5$.
Furthermore, the number of cuts calculated by the separation oracle remained far smaller than the theoretical $2^{T+1}$ bound, as shown by \Cref{fig_benchmark_max_added_cuts}. 
 
The main point of improvement for this algorithm is the runtime, as illustrated by \Cref{fig_benchmark_av_res_time}. The bottleneck is the separation oracle; for $T=168$ and $N=50$, $85\%$ of the computation time is taken by the separation oracle, for instance. 
The multithreading implementation of the evaluation oracle is already faster than a sequential calculation ($8$ times faster on average for $T=96$, $N=50$ for example); exploring other parallelization implementations could further improve the performance. 
This will be the object of future studies.

\begin{figure*}[h]
  \begin{minipage}[t]{.3\linewidth}
\centering
\resizebox{\linewidth}{!}{
%
\begin{tikzpicture}

\definecolor{crimson2143940}{RGB}{214,39,40}
\definecolor{darkgray176}{RGB}{176,176,176}
\definecolor{darkorange25512714}{RGB}{255,127,14}
\definecolor{forestgreen4416044}{RGB}{44,160,44}
\definecolor{lightgray204}{RGB}{204,204,204}
\definecolor{steelblue31119180}{RGB}{31,119,180}

\begin{axis}[
legend cell align={left},
legend style={error bar legend, fill opacity=0.8, draw opacity=1, text opacity=1, draw=lightgray204},
tick align=outside,
tick pos=left,
x grid style={darkgray176},
xlabel={Optimization horizon $T$ (h)},
xmin=-0.31, xmax=3.21,
xtick style={color=black},
xtick={0,1,2,3},
xticklabels={24,48,96,168},
y grid style={darkgray176},
ymin=0.65, ymax=8.35,
ytick style={color=black},
ytick={1,...,8},
ylabel={Number of iterations}
]
\path [draw=steelblue31119180, semithick]
(axis cs:-0.15,1)
--(axis cs:-0.15,1);

\path [draw=steelblue31119180, semithick]
(axis cs:.85,1)
--(axis cs:.85,2);

\path [draw=steelblue31119180, semithick]
(axis cs:1.85,1)
--(axis cs:1.85,2);

\path [draw=steelblue31119180, semithick]
(axis cs:2.85,1)
--(axis cs:2.85,3);

\path [draw=darkorange25512714, semithick]
(axis cs:-0.05,1)
--(axis cs:-0.05,4);

\path [draw=darkorange25512714, semithick]
(axis cs:0.95,1)
--(axis cs:.95,3);

\path [draw=darkorange25512714, semithick]
(axis cs:1.95,1)
--(axis cs:1.95,3);

\path [draw=darkorange25512714, semithick]
(axis cs:2.95,2)
--(axis cs:2.95,3);

\path [draw=forestgreen4416044, semithick]
(axis cs:0.05,1)
--(axis cs:0.05,4);

\path [draw=forestgreen4416044, semithick]
(axis cs:1.05,1)
--(axis cs:1.05,4);

\path [draw=forestgreen4416044, semithick]
(axis cs:2.05,2)
--(axis cs:2.05,5);

\path [draw=forestgreen4416044, semithick]
(axis cs:3.05,2)
--(axis cs:3.05,5);

\path [draw=crimson2143940, semithick]
(axis cs:0.15,1)
--(axis cs:0.15,7);

\path [draw=crimson2143940, semithick]
(axis cs:1.15,2)
--(axis cs:1.15,8);

\path [draw=crimson2143940, semithick]
(axis cs:2.15,2)
--(axis cs:2.15,5);

\addplot [semithick, steelblue31119180, mark=*, mark size=2, mark options={solid}, only marks]
table {%
-0.15 1
0.85 1.3
1.85 1.2
2.85 2
};
\addlegendentry{2 EVs}
\addplot [semithick, darkorange25512714, mark=*, mark size=2, mark options={solid}, only marks]
table {%
-0.05 1.7
0.95 1.3
1.95 2.1
2.95 2.4
};
\addlegendentry{10 EVs}
\addplot [semithick, forestgreen4416044, mark=*, mark size=2, mark options={solid}, only marks]
table {%
0.05 2.6
1.05 1.9
2.05 3.4
3.05 3.4
};
\addlegendentry{50 EVs}
\addplot [semithick, crimson2143940, mark=*, mark size=2, mark options={solid}, only marks]
table {%
0.15 3.8
1.15 3.2
2.15 3.3
};
\addlegendentry{100 EVs}
\end{axis}

\end{tikzpicture}
\caption{Number of iterations performed by \Cref{cutting_planes}. The error bar represents extreme values obtained, as well as the average value. Despite an increase with problem size, the iteration number remains low.}
\label{fig_benchmark_av_iterations}%
  \end{minipage}\hfil
  \begin{minipage}[t]{.3\linewidth}
\centering
\resizebox{\linewidth}{!}{

\begin{tikzpicture}

\definecolor{crimson2143940}{RGB}{214,39,40}
\definecolor{darkgray176}{RGB}{176,176,176}
\definecolor{darkorange25512714}{RGB}{255,127,14}
\definecolor{forestgreen4416044}{RGB}{44,160,44}
\definecolor{lightgray204}{RGB}{204,204,204}
\definecolor{mediumpurple148103189}{RGB}{148,103,189}
\definecolor{steelblue31119180}{RGB}{31,119,180}

\begin{axis}[
legend cell align={left},
legend style={ fill opacity=0.8, draw opacity=1, text opacity=1, draw=lightgray204},
legend entries={$2^{T+1}$, 2 EVs, 10 EVs, 50 EVs, 100 EVs},
log basis y={10},
tick align=outside,
tick pos=left,
x grid style={darkgray176},
xlabel={Optimization horizon $T$ (h)},
xmin=10.4, xmax=177.6,
xtick style={color=black},
xtick={24,48,...,168},
y grid style={darkgray176},
ymin=0.5, ymax=1e+12,
ymode=log,
ytick style={color=black},
ylabel={Number of added cuts}
]
\addlegendimage{no markers, mediumpurple148103189}
\addlegendimage{error bar legend, only marks, mark size = 1, steelblue31119180}
\addlegendimage{error bar legend, only marks, mark size = 1, darkorange25512714}
\addlegendimage{error bar legend, only marks, mark size = 1, forestgreen4416044}
\addlegendimage{error bar legend, only marks, mark size = 1, crimson2143940}
\path [draw=steelblue31119180, semithick]
(axis cs:18,0.01)
--(axis cs:18,0.01);

\path [draw=steelblue31119180, semithick]
(axis cs:42,0.01)
--(axis cs:42,4184);

\path [draw=steelblue31119180, semithick]
(axis cs:90,0.01)
--(axis cs:90,10950);

\path [draw=steelblue31119180, semithick]
(axis cs:162,0.01)
--(axis cs:162,772704);

\path [draw=darkorange25512714, semithick]
(axis cs:22,0.01)
--(axis cs:22,5360);

\path [draw=darkorange25512714, semithick]
(axis cs:46,0.01)
--(axis cs:46,13360);

\path [draw=darkorange25512714, semithick]
(axis cs:94,0.01)
--(axis cs:94,155215);

\path [draw=darkorange25512714, semithick]
(axis cs:166,70644)
--(axis cs:166,408855);

\path [draw=forestgreen4416044, semithick]
(axis cs:26,0.01)
--(axis cs:26,20350);

\path [draw=forestgreen4416044, semithick]
(axis cs:50,0.01)
--(axis cs:50,65184);

\path [draw=forestgreen4416044, semithick]
(axis cs:98,44642)
--(axis cs:98,392021);

\path [draw=forestgreen4416044, semithick]
(axis cs:170,0.01)
--(axis cs:170,681137);

\path [draw=crimson2143940, semithick]
(axis cs:30,0.01)
--(axis cs:30,76943);

\path [draw=crimson2143940, semithick]
(axis cs:54,0.01)
--(axis cs:54,254282);

\path [draw=crimson2143940, semithick]
(axis cs:102,84737)
--(axis cs:102,505043);

\addplot [semithick, mediumpurple148103189]
table {%
24 33554432
48 562949953421312
96 1.58456325028529e+29
168 7.48288838313422e+50
};
\addplot [semithick, steelblue31119180, mark=*, mark size=1, mark options={solid}, only marks]
table {%
18 0
42 1089.6
90 2067.3
162 304070.000000001
};
\addplot [semithick, darkorange25512714, mark=*, mark size=1, mark options={solid}, only marks]
table {%
22 1138
46 1696.16666666667
94 32937.2
166 188467.6
};
\addplot [semithick, forestgreen4416044, mark=*, mark size=1, mark options={solid}, only marks]
table {%
26 8600.89999999998
50 22681.8
98 189375.6
170 427731.666666665
};
\addplot [semithick, crimson2143940, mark=*, mark size=1, mark options={solid}, only marks]
table {%
30 31892.3
54 75210.0833333333
102 251844.299999999
};
\end{axis}

\end{tikzpicture}

\caption{Number of cuts added to \ac{UC}. There is no point for $N=2$ at $T=24$ as every simulation converged in 1 iteration. All values are much smaller than the theoretical $2^{T+1}$ bound.}
\label[figure]{fig_benchmark_max_added_cuts}%
  \end{minipage}\hfil
  \begin{minipage}[t]{.3\linewidth}
\centering
\resizebox{\linewidth}{!}{
%
\begin{tikzpicture}

\definecolor{crimson2143940}{RGB}{214,39,40}
\definecolor{darkgray176}{RGB}{176,176,176}
\definecolor{darkorange25512714}{RGB}{255,127,14}
\definecolor{forestgreen4416044}{RGB}{44,160,44}
\definecolor{lightgray204}{RGB}{204,204,204}
\definecolor{steelblue31119180}{RGB}{31,119,180}

\begin{axis}[
legend cell align={left},
legend style={
error bar legend,
  fill opacity=0.8,
  draw opacity=1,
  text opacity=1,
  at={(0.98,0.03)},
  anchor=south east,
  draw=lightgray204
},
log basis y={10},
tick align=outside,
tick pos=left,
x grid style={darkgray176},
xlabel={Optimization horizon $T$ (h)},
xmin=-0.31, xmax=3.21,
xtick style={color=black},
xtick={0,1, 2, 3},
xticklabels={24,48, 96, 168},
y grid style={darkgray176},
ymin=0.763305923605485, ymax=23045.9921420311,
ymode=log,
ytick style={color=black},
ylabel={Resolution time (s)}
]
\path [draw=steelblue31119180, semithick]
(axis cs:-0.15,1.2199170589447)
--(axis cs:-0.15,14.8493514060974);

\path [draw=steelblue31119180, semithick]
(axis cs:0.85,1.54920768737793)
--(axis cs:0.85,15.9277083873749);

\path [draw=steelblue31119180, semithick]
(axis cs:1.85,4.05273771286011)
--(axis cs:1.85,31.8070268630981);

\path [draw=steelblue31119180, semithick]
(axis cs:2.85,267.480078220367)
--(axis cs:2.85,3173.38650774956);

\path [draw=darkorange25512714, semithick]
(axis cs:-0.05,1.27233457565308)
--(axis cs:-0.05,9.78963327407837);

\path [draw=darkorange25512714, semithick]
(axis cs:.95,2.50820708274841)
--(axis cs:0.95,20.0158603191376);

\path [draw=darkorange25512714, semithick]
(axis cs:1.95,6.53500461578369)
--(axis cs:1.95,285.139721155167);

\path [draw=darkorange25512714, semithick]
(axis cs:2.95,829.79585647583)
--(axis cs:2.95,3381.53506398201);

\path [draw=forestgreen4416044, semithick]
(axis cs:0.05,1.53253126144409)
--(axis cs:0.05,70.3579578399658);

\path [draw=forestgreen4416044, semithick]
(axis cs:1.05,10.95281291008)
--(axis cs:1.05,316.77522110939);

\path [draw=forestgreen4416044, semithick]
(axis cs:2.05,442.104584932327)
--(axis cs:2.05,3489.55773591995);

\path [draw=forestgreen4416044, semithick]
(axis cs:3.05,7791.82089352608)
--(axis cs:3.05,14419.9494452477);

\path [draw=crimson2143940, semithick]
(axis cs:0.15,2.64020085334778)
--(axis cs:0.15,972.097729444504);

\path [draw=crimson2143940, semithick]
(axis cs:1.15,226.820820569992)
--(axis cs:1.15,5800.44518303871);

\path [draw=crimson2143940, semithick]
(axis cs:2.15,1386.97916269302)
--(axis cs:2.15,12536.6701107025);

\addplot [semithick, steelblue31119180, mark=*, mark size=1, mark options={solid}, only marks]
table {%
-0.15 2.65218722820282
0.85 3.80522921085358
1.85 15.1450152397156
2.85 1311.37046861649
};
\addlegendentry{2 EVs}
\addplot [semithick, darkorange25512714, mark=*, mark size=1, mark options={solid}, only marks]
table {%
-0.05 3.13877234458923
0.95 6.67036004066467
1.95 79.2149761676788
2.95 1432.38655376434
};
\addlegendentry{10 EVs}
\addplot [semithick, forestgreen4416044, mark=*, mark size=1, mark options={solid}, only marks]
table {%
0.05 30.0206317186356
1.05 107.062796139717
2.05 1559.20768713951
3.05 11800.2050302982
};
\addlegendentry{50 EVs}
\addplot [semithick, crimson2143940, mark=*, mark size=1, mark options={solid}, only marks]
table {%
0.15 275.254098105431
1.15 1285.92343790531
2.15 5375.33261811734
};
\addlegendentry{100 EVs}
\end{axis}

\end{tikzpicture}

\caption{Resolution time as a function of $T$ and $N$. As expected, it increases with $T$ and $N$.}
\label{fig_benchmark_av_res_time}%
  \end{minipage}%
\end{figure*}

\section{CONCLUDING REMARKS}\label{sec-conclusion}
We studied the integration of flexible \acp{EV}
in a unit commitment problem.
We showed that this problem can be solved exactly and
in a scalable way for instances 
with a large number $N$ of categories of flexible \ac{EV} users,
with a theoretical complexity scaling in $N\log N$.
We also developed a practical cutting-plane algorithm,
which, on a real large-scale instance, converged in few iterations.
This work can be extended in several ways. First, improvements of the practical algorithm can be considered, such as exploring other strategies to add cutting planes. Next, it would be interesting to
obtain tight bounds on the true number of facets of the polytopes representing
\ac{EV} flexibilities
(experiments in small dimension
indicate that for typical instances, this number is far below the theoretical
$O(2^{T+1})$ bound); parcimonious approximations of these polytopes may also be
investigated.
Finally, it may be interesting to cover other types of flexibilities, such as hydropower.

\addtolength{\textheight}{-1cm}   



%

\section*{ACKNOWLEDGMENT}
We thank Gleb Koshevoy for pointing out the generalized polymatroid structure of \ac{EV} flexibilities and thank Juliette Lesturgie for her help in understanding the power system and \ac{UC} constraints. We also thank both of them for many discussions.


\bibliographystyle{ieeetr}
\bibliography{references}

\end{document}